\documentclass[12pt]{amsart}

\usepackage{parskip}
\usepackage{amsmath}
\usepackage{mathtools,amssymb,amsthm}
\usepackage{hyperref}
\usepackage{fancyref}
\usepackage{cleveref}

\newtheorem{thm}{Theorem}
\newtheorem{prop}[thm]{Proposition}
\newtheorem{question}[thm]{Question}

\newtheorem{lem}[thm]{Lemma}
\newtheorem{cor}[thm]{Corollary}
\theoremstyle{definition}

\theoremstyle{definition}

\theoremstyle{definition}

\theoremstyle{remark}

\theoremstyle{definition}
\newtheorem*{defn*}{Definition}
\newtheorem*{acknowledgements}{Acknowledgements}

\newtheorem*{theorem1}{Theorem 1}
\newtheorem*{thm*}{Theorem}
\newtheorem*{lem*}{Lemma}

\newcommand{\Addresses}{{
  \bigskip
  \footnotesize

  \textit{Contact E-mail address}, A.~F.~J.-C.~Launois: \texttt{A.Launois@outlook.com}}}

\title{On a $q$-Skew Amitsur's Theorem}
\author{Aristide F. J.-C. Launois}
\date{April 2026}
\keywords{Jacobson radical; Ore extension; locally torsion automorphism; locally nilpotent derivation; nil and nilpotent ideals.}
\subjclass[2020]{16N20; 16N40.}

\begin{document}
\bibliographystyle{amsalpha}

\begin{abstract}
    Let $R$ be an algebra over an uncountable field, $\sigma$ a locally torsion automorphism and $\delta$ a locally nilpotent left $\sigma$-derivation such that $q\sigma\delta = \delta\sigma$, where $q$ is a nonzero scalar. We show that the constant part of the Jacobson radical of the Ore extension $R[x;\sigma,\delta]$ is nil. This partially answers a question of Greenfeld, Smoktunowicz and Ziembowski posed in 2019. As a corollary, we employ Shin's 2024 result to prove a q-skew Amitsur's theorem whenever the field is additionally assumed to be of characteristic zero. That is, the Jacobson radical of $R[x;\sigma, \delta]$ is $N[x;\sigma,\delta]$ for some nil ideal $N$ of $R$. 
\end{abstract}

\maketitle

\section{Introduction}

Let $R$ be a ring. By ring we shall mean a noncommutative associative ring not necessarily with 1. By ideal we will mean a two-sided ideal unless otherwise stated. Given an automorphism $\sigma:R \to R$, a \textbf{$\sigma$-derivation} is an additive map $\delta:R\to R$ satisfying the twisted Leibniz rule,\[
        \delta(ab) = \sigma(a)\delta(b) + \delta(a)b,
    \]for all $a,b \in R$. Throughout this paper, $\sigma:R \to R$ will denote an automorphism and $\delta:R\to R$ a $\sigma$-derivation. Given $R$, $\sigma$, and $\delta$, we can form the \textbf{Ore extension} $R[x;\sigma, \delta]$ which is a free left $R$-module with basis $1,x,x^2,\dots$ with multiplication rule defined by \[
        xr = \sigma(r)x + \delta(r)
    \] for all $r \in R$. Since $R[x;\sigma,\delta]$ is a free left $R$-module, we can define the (left) \textbf{degree} of an element $$f = a_0+a_1x+\cdots +a_nx^n \in R[x;\sigma,\delta]$$ to be the largest $i$ such that $a_i \neq 0$. We denote the degree by $\deg(f)$.

If, for every $a \in R$, there exists an $n = n(a) \in \mathbb{N}$ such that $\delta^n(a) = 0$, we say that $\delta$ is \textbf{locally nilpotent}. If, for every $a \in R$, there exists an $n = n(a) \in \mathbb{N}$ such that $\sigma^n(a) = a$, we say that $\sigma$ is  \textbf{locally torsion}. Note that when $R$ is an algebra over a field $k$ we assume that the derivation and automorphism are $k$-linear.

There are many important objects arising in various parts of mathematics that can be described using (iterated) Ore extensions. For example, universal enveloping algebras of some Lie algebras, quantum groups, and rings of differential operators, see for instance \cite[Prologue]{goodearl2004introduction}. The Jacobson radical is the most well studied radical and allows us to understand the structure and representation theory of rings and algebras. In 1956, Amitsur proved that the Jacobson radical of a polynomial ring $J(R[x]) = I[x]$, where $I = J(R[x])\cap R$ is a nil ideal of $R$ \cite{Amitsur_1956}. Whilst the result is very powerful, it does not tell us what the nil ideal is. It is natural to ask if it is the largest nil ideal of the ring, the upper nilradical $\text{Nil}^*(R)$. It turns out that this is equivalent to another famous problem in ring theory \cite[Exercise 10.25]{lam1991first}, the Koethe conjecture. In its original form it asks whether the sum of two left nil ideals is also nil \cite{Kothe1930} which was first posed almost a century ago. In 1972, Krempa showed that the problem is equivalent to whether polynomial rings over nil rings are Jacobson radical \cite[Theorem 2]{Krempa1972}. In 2000, Smoktunowicz showed that polynomial rings over nil rings need not be nil \cite{SMOKTUNOWICZ2000427}. This strengthened suspicions that the Koethe conjecture may be answered in the negative in general. It is known to hold for Noetherian and P.I. rings. 

Motivated by understanding the structure of Ore extensions and the Koethe conjecture, there has been work done on generalising Amitsur's result to Ore extensions. In 1980, Bedi and Ram showed that, when $\delta =0$ and $\sigma$ is locally torsion, a `skew' Amitsur's theorem holds \cite[Theorem 3.1]{Bedi1980}. In the same paper they prove that, in general, $J(R[x;\sigma]) \neq (J(R[x;\sigma])\cap R)[x;\sigma]$ \cite[Example 3.5]{Bedi1980}. However, they gave a nice characterisation of the Jacobson radical in this case. On the contrary, in 1983, Ferrero, Kishimoto, and Motose showed that, for $\sigma = 1$, the equality $J(R[x;\delta]) = (J(R[x;\delta])\cap R)[x;\delta]$ holds \cite[Theorem 3.2]{DiffPolyRadFerrero}. In the same paper it was shown that $J(R[x;\delta])\cap R$ is nil if $R$ is commutative \cite[Theorem 3.4]{DiffPolyRadFerrero}. This was followed up to prove a skew Amitsur theorem in 2017 where Smoktunowicz showed that $J(R[x;\delta])\cap R$ is nil whenever $\delta$ is locally nilpotent and $R$ is an algebra over a field of characteristic $p >0$ \cite[Theorem 2]{AgataHowFarCanWeGo}. In general, however, we cannot expect $J(R[x;\delta])\cap R$ to be nil, as was shown by Smoktunowicz \cite[Theorem 1]{AgataHowFarCanWeGo}. In 2014, Smoktunowicz and Ziembowski showed that $J(R[x;\delta])\cap R$ is nil whenever $\delta$ is locally nilpotent and $R$ is an algebra over an uncountable field \cite[Proposition 2.1]{SMOKTUNOWICZuncountablefield}. 

Recently, in 2024, Shin extended the results of Bedi and Ram to give a characterisation of the Jacobson radical when $R$ is an algebra over a field $k$ of characteristic zero and $\delta$ is \textbf{$q$-skew} \cite[Theorem 1]{shin2024jacobsonradicalsoreextensions}, that is, $q\sigma\delta = \delta\sigma$ for some $q \in k\backslash \{0\}$. In the same paper it was shown that, for an algebra over a field of characteristic $p>0$, a locally torsion automorphism $\sigma$, and a locally nilpotent 1-skew derivation $\delta$, the ideal $J(R[x;\sigma,\delta])\cap R$ is nil \cite[Theorem 3]{shin2024jacobsonradicalsoreextensions}. However, whether or not a q-skew Amitsur's theorem holds in the case of a general Ore extension remains open. 
    
We will prove the following result following the ideas of Smoktunowicz and Ziembowski in \cite[Proposition 2.1]{SMOKTUNOWICZuncountablefield}.

\begin{thm}
\label{theorem}
    Let $R$ be a $k$-algebra where $k$ is an uncountable field, $\sigma$ a locally torsion $k$-algebra automorphism, and $\delta$ a $k$-linear $\sigma$-derivation which is locally nilpotent and $q$-skew. Then $$I = \{a\in R:ax^{t}\in J(R[x;\sigma,\delta])\}$$ is a nil ideal of $R$ for any $t \in \mathbb{N}$.
\end{thm}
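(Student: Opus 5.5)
Following Smoktunowicz and Ziembowski, the plan is to use the uncountability of $k$ to produce quasi-inverses of a one-parameter family $1 - \lambda a x^{t}$ and then extract nilpotency of $a$ from the coefficients. First one checks that $I$ is an ideal. It is closed under left multiplication by $R$ and under addition. For right multiplication, $ax^t r = a\sum_j c_j(r)x^j$, where the $c_j(r)$ are words in $\sigma,\delta$ applied to $r$ and $c_t(r)=\sigma^t(r)$. Since $J$ is an ideal, an induction on $t$ (using that $\delta$ lowers degree and $\sigma$ is bijective) shows $a\sigma^t(r)\in I$, and hence $aR\subseteq I$. So it suffices to show that each $a\in I$ is nilpotent.

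Fix $a\in I$ and set $f=ax^t\in J(R[x;\sigma,\delta])$. Work in $S=R[x;\sigma,\delta]$, adjoining a unit if necessary; since $k$ is a field, $\lambda f\in J$ for every $\lambda\in k$. Hence for each $\lambda\in k$ there is $g_\lambda\in S$ with $(1-\lambda f)(1+g_\lambda)=1$, and we may write $g_\lambda=\sum_{n\ge1}\lambda^n h_n$ only formally. Since $k$ is uncountable and each $g_\lambda$ has finite degree and coefficients in a countable-dimensional subspace, there are uncountably many $\lambda$ with $\deg g_\lambda\le m$ for a common $m$. Next, use local finiteness: the subalgebra generated by $a$ is closed under $\sigma,\delta$ up to a finite-dimensional space, because $\sigma$ is locally torsion and $\delta$ locally nilpotent. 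Let $V$ be the finite-dimensional $\sigma$- and $\delta$-stable subspace spanned by $\sigma^i\delta^j(a)$. The $q$-skew relation $\delta\sigma=q\sigma\delta$ lets every word in $\sigma,\delta$ be rewritten as a scalar multiple of $\sigma^i\delta^j$, so this span is finite and controlled.

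The key computation is to expand $(ax^t)^n$ in the left normal form. Its top coefficient is $a\sigma^t(a)\sigma^{2t}(a)\cdots\sigma^{(n-1)t}(a)$, and its lower coefficients are sums of products $a\,\sigma^{i_1}\delta^{j_1}(a)\cdots$ with $q$-power scalars, where, by $q$-skewness, the $q$-binomial coefficients appear. Comparing the identity $g_\lambda=\sum_{n\ge1}\lambda^n f^n$ degree by degree against $\deg g_\lambda\le m$ for uncountably many (hence infinitely many) $\lambda$, a Vandermonde argument forces the coefficients of $x^{j}$ with $j>m$ in $f^n$ to vanish for $n$ large. In particular the leading coefficient $a\sigma^t(a)\cdots\sigma^{(n-1)t}(a)$ vanishes for $n>m/t$. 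Now let $N$ be an order of $\sigma^t$ on the finite set $\{\sigma^{it}(a)\}$. Applying the vanishing to suitable elements of $I$ built from $a$ (for instance $a' = a\sigma^t(a)\cdots\sigma^{(N-1)t}(a)$, which is fixed by $\sigma^{Nt}$, and $a'x^{Nt}\in J$ since $I$ is an ideal) yields $a'^{\,n}=0$. A further symmetrization argument then gives that $a$ itself is nilpotent. The shortcut would be to first replace $f$ by $f^{N}$ so that the top coefficient becomes a genuine power.

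The main obstacle is the bookkeeping in the last step. Lower-degree terms involving $\delta$ contaminate the recursion, so one must show that the top-degree coefficient of the identity $(1-\lambda f)(1+g_\lambda)=1$ is governed purely by $\sigma$. That is clear, since $\delta$ strictly lowers degree. The real difficulty is concluding nilpotency of $a$ rather than only of products of its $\sigma$-twists. The approach I would try first is to pass to the $\sigma^{Nt}$-fixed subalgebra. There the twisted products $a\sigma^t(a)\cdots$ become honest powers of $a$ after grouping, and the locally nilpotent $\delta$ only perturbs lower terms, which are handled by induction on the nilpotency index of $\delta$ on $V$.
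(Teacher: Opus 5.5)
Your overall plan (a common degree bound for the quasi-inverses of $\lambda a x^{t}$ from the uncountability of $k$, then a Vandermonde argument killing the top coefficient of a power of $ax^{t}$) is the paper's. However, you run it with the fixed exponent $t$, and this leaves two genuine gaps.

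First, ``comparing $g_\lambda=\sum_{n\ge1}\lambda^nf^n$ degree by degree'' is never justified. You need three things: the series must converge in $R[[x;\sigma,\delta]]$; its sum must equal $g_\lambda$ (uniqueness of quasi-inverses, Lemma~\ref{UniquenessofQuasiInverse}); and, for Vandermonde, each coefficient of $x^{j}$ must receive contributions from only finitely many powers $f^{n}$, so that it is a polynomial in $\lambda$. For $f=ax^{t}$ none of this is automatic. For $t=0$ (the case $J\cap R$ of the Greenfeld--Smoktunowicz--Ziembowski question) the series $\sum\lambda^na^n$ is meaningless and ``$n>m/t$'' is undefined. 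For $1\le t\le\text{nildeg}(a)$, low-degree coefficients of $(ax^{t})^{n}$ need not vanish as $n$ grows; for example, the $x^{1}$-coefficient of $(ax)^{n}$ is $a\delta(a\delta(\cdots a\delta(a)))$. The paper sidesteps this by noting that $ax^{s}=ax^{t}\cdot x^{s-t}\in J$ for all $s\ge t$, and working with $ax^{s}$ for $s>\text{nildeg}(a)$. Using the bounds of Lemma~\ref{nildeglem}, Lemma~\ref{mexists} gives $((ax^{s})^{m})_{ds}=0$ once $m>ds/(s-\text{nildeg}(a))$. Powers $m<d$ have degree below $ds$, so the $x^{ds}$-coefficient of the quasi-inverse is a finite polynomial in $\lambda$.

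Second, the step you flag as ``the real difficulty'' is left open, and your suggestions do not close it. Vanishing of the twisted products $a\sigma^{t}(a)\cdots\sigma^{(n-1)t}(a)$ for a single $t$ cannot by itself force nilpotency. In $M_2(k)$ with $\sigma(e_{ij})=e_{3-i,3-j}$, $a=e_{11}$ and $t=1$, all these products vanish for $n\ge2$, yet $a$ is idempotent. Your $a'=a\sigma^{t}(a)\cdots\sigma^{(N-1)t}(a)$ only yields $a'^{\,n}=\prod_{l<nN}\sigma^{lt}(a)=0$, which is the same information again. Likewise $f^{N}$ has top coefficient $a'$, not a power of $a$, and the ``symmetrization'' and fixed-subalgebra induction are not actual arguments.

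The missing idea is the same exponent shift, applied to $a$ itself; you already use it for $a'$ when you write $a'x^{Nt}\in J$. Pick $s\ge t$ with $s>\text{nildeg}(a)$ and $s$ a positive multiple of the $\sigma$-period of $a$, which exists since $\sigma$ is locally torsion. Then $\sigma^{s}(a)=a$, and the vanishing top coefficient $a\sigma^{s}(a)\cdots\sigma^{(d-1)s}(a)$ is simply $a^{d}$.

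Separately, your induction for closure of $I$ under right multiplication does not work as stated, because the terms $ac_j(r)x^{j}$ with $j<t$ are not known to lie in $J$. The paper does not establish this closure either; its proof only shows that each $a\in I$ is nilpotent.
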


For $t = 0$, this result gives a partial answer to the following open question posed by Greenfeld, Smoktunowicz and Ziembowski in 2019 \cite{GreenfieldSmoktunowiczZiembowski}. 

\begin{question}\cite[Question 6.17]{GreenfieldSmoktunowiczZiembowski}
    Consider $J(R[x;\sigma,\delta]) \cap R$. Is it nil if we assume that $\delta$ is locally nilpotent? What if we assume that $\sigma$ is locally torsion?
\end{question}

As a corollary, we will show that a $q$-skew Amitsur's theorem holds whenever the field is additionally assumed to be of characteristic zero.

\section{Supporting Lemmas}

Recall that an element $r\in R$ is \textbf{quasi-invertible} if there exists an $s \in R$ such that \[
        r+s+rs = 0.
    \]We say that $s$ is the \textbf{quasi-inverse} of $r$. Since any ring can be embedded into a ring $R^1$ with unit, this can be equivalently written as $(1+r)(1+s) =1$ in $R^1$. It follows that quasi-inverses are unique.

\begin{lem}
    \label{UniquenessofQuasiInverse}
    Let $s,s'\in R$ be quasi-inverses of $r\in R$. Then $s=s'$.
\end{lem}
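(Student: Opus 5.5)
The plan is to work in the unitization $R^1$, where quasi-invertibility reads $(1+r)(1+s) = 1$. Note that the definition only gives $r+s+rs=0$, which is one-sided. So I first record what the hypothesis actually provides. If, as intended, a quasi-inverse satisfies both $r+s+rs = 0$ and $r+s+sr = 0$, then $1+s$ is a two-sided inverse of $1+r$ in $R^1$. The proof then reduces to the standard uniqueness of inverses in a monoid.

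Concretely, suppose $s$ and $s'$ are both quasi-inverses of $r$. Then $(1+r)(1+s) = 1 = (1+s')(1+r)$ holds in $R^1$. Associativity of $R^1$ gives
\[
1+s' = (1+s')\bigl((1+r)(1+s)\bigr) = \bigl((1+s')(1+r)\bigr)(1+s) = 1+s,
\]
and subtracting $1$ yields $s = s'$.

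To avoid any reliance on the unitization, the same computation can be carried out directly in $R$. Starting from $r+s+rs=0$ and $r+s'+s'r=0$, I would expand $s' + s's + \ldots$ in two ways. Namely, consider $s'(r+s+rs) = 0$ and $(r+s'+s'r)s = 0$, which give
\[
s'r+s's+s'rs = 0 \quad\text{and}\quad rs+s's+s'rs = 0 .
\]
Subtracting these yields $s'r = rs$. Substituting back into $r+s+rs = 0$ and $r+s'+s'r = 0$ gives $s = -r-rs = -r-s'r = s'$.

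The only real obstacle is the one-sidedness of the stated definition. If only right quasi-inverses $r+s+rs=0$ were assumed for both $s$ and $s'$, uniqueness could fail in general. So the step to get right is to invoke, or make explicit, that a quasi-inverse is two-sided. This is the convention implicit in the phrase ``equivalently $(1+r)(1+s)=1$'' being used to conclude uniqueness. I would state this convention at the start and then run the associativity argument above.
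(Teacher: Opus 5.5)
Your proof is correct and is essentially the paper's argument: the paper also computes $1+s=(1+s)(1+r)(1+s')=1+s'$ in $R^1$ by associativity, with one quasi-inverse acting on the left and the other on the right (the roles of $s$ and $s'$ are swapped relative to yours), and your direct computation in $R$ is just that identity written out. Your remark about one-sidedness is apt: the paper's proof tacitly uses $(1+s)(1+r)=1$, which does not follow from the stated definition $r+s+rs=0$ alone, so the two-sided convention is genuinely needed — or at least the weaker hypothesis that one of $s,s'$ is a left and the other a right quasi-inverse, which is all either argument uses.
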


\begin{proof}
    By the previous remark, in $R^1$\[
        (1+s)(1+r) = 1 = (1+r)(1+s').
    \]Thus,\[
         1+s = (1+s)(1+r)(1+s') = 1+s'.
    \]Hence, $s=s'$ as required.
\end{proof}

It is well known that every element of the Jacobson radical of a ring is quasi-invertible, see for instance \cite[Exercise 4.4]{lam1991first}.

In \cite{GreenfieldSmoktunowiczZiembowski}, Greenfeld, Smoktunowicz and Ziembowski showed that the Ore power series ring $R[[x;\sigma,\delta]]$ is well defined whenever the derivation is locally nilpotent. 

\begin{thm*} \cite[Theorem 5.3]{GreenfieldSmoktunowiczZiembowski}
\label{OrePowerSeries}
    Let $R$ be a ring, $\sigma$ an endomorphism of $R$ and $\delta$ a $\sigma$-derivation that is locally nilpotent. Then the natural extension of multiplication from $R[x;\sigma,\delta]$ to $R[[x;\sigma,\delta]]$ is well defined.
\end{thm*}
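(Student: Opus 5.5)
The plan is to make the product in $R[[x;\sigma,\delta]]$ coefficientwise finite, and then get the ring axioms from the polynomial case by truncation. In $R[x;\sigma,\delta]$ an easy induction on $n$ gives
\[
x^n r=\sum_{k=0}^{n}\Bigl(\sum_{w\in W_{n,k}} w(r)\Bigr)x^k ,
\]
where $W_{n,k}$ is the set of words in $\sigma,\delta$ of length $n$ containing exactly $k$ letters $\sigma$. This uses only that $\sigma$ is an endomorphism and the relation $xr=\sigma(r)x+\delta(r)$. For $f=\sum_i a_ix^i$ and $g=\sum_j b_jx^j$, the natural candidate for the product is
\[
(fg)_m=\sum_{j\le m}\ \sum_{i\ge m-j}\ a_i\sum_{w\in W_{i,m-j}} w(b_j).
\]
So everything reduces to showing that this sum is finite for each $m$.

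\textbf{Key finiteness step (the main obstacle).} For fixed $b\in R$ and fixed $k$, I claim there is an $N=N(b,k)$ such that $w(b)=0$ for every word $w$ with exactly $k$ letters $\sigma$ and more than $N$ letters $\delta$. Write such a word as
\[
w=\delta^{e_0}\sigma\delta^{e_1}\sigma\cdots\sigma\delta^{e_k}.
\]
I argue by induction on $k$.
\begin{itemize}
\item For $k=0$, take $N(b,0)$ to be the nilpotency index $n(b)$ of $\delta$ at $b$.
\item For the inductive step, note that $\delta^{e_k}(b)=0$ unless $e_k<n(b)$. This leaves only the finitely many elements $c_e=\sigma(\delta^{e}(b))$ with $e<n(b)$. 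The remaining word $\delta^{e_0}\sigma\cdots\delta^{e_{k-1}}$ has $k-1$ letters $\sigma$, so it kills $c_e$ once it has more than $N(c_e,k-1)$ letters $\delta$.
\item Hence $N(b,k)=n(b)+\max_{e<n(b)}N(c_e,k-1)$ works.
\end{itemize}
The point is that local nilpotence controls the $\delta$'s even though $\sigma$ and $\delta$ need not commute, because at each stage only finitely many elements have to be handled. Applied with $b=b_j$ for $j\le m$ and $k=m-j$, this shows that only $i\le m+\max_{j\le m}N(b_j,m-j)$ contribute to $(fg)_m$. Thus the product is well defined, and it clearly restricts to the usual product on polynomials.

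\textbf{Ring axioms.} Additivity and distributivity are immediate from the formula. For associativity, fix $m$. By the finiteness step, the $x^m$-coefficient of $(fg)h$ involves $h_j$ only for $j\le m$. It then involves only finitely many coefficients $(fg)_l$, namely those with $l\le L$ for some bound $L$ depending on $h_0,\dots,h_m$. Each of these in turn involves only finitely many $g_{j'}$ and $f_i$. The same holds for $f(gh)$. Choose $D$ larger than every index that appears on either side, and let $f_{\le D},g_{\le D},h_{\le D}$ be the truncations. Since coefficient $m$ is computed from these finitely many entries alone, both sides agree with the corresponding coefficient for the truncated polynomials. The polynomials satisfy associativity in the ring $R[x;\sigma,\delta]$, so the two sides are equal. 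The only care needed is to check that truncating $f$ and $g$ does not change the relevant low coefficients of $fg$; this is exactly what the explicit finite sums show.
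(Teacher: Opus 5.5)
Your proof is correct. The word expansion of $x^n r$, the induction on the number of $\sigma$'s, and the truncation argument for associativity all go through as you describe. The induction needs no commutation relation between $\sigma$ and $\delta$, because at each stage only the finitely many elements $\sigma(\delta^{e}(b))$ with $e<n(b)$ have to be controlled.

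The paper does not prove this statement; it quotes it from \cite[Theorem 5.3]{GreenfieldSmoktunowiczZiembowski}. So there is no internal proof to compare with, and yours is the natural self-contained argument, valid in the full generality stated ($\sigma$ any endomorphism, no $q$-skew assumption).

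Your associativity check is worth keeping. The proof of Lemma \ref{lemma} applies Lemma \ref{UniquenessofQuasiInverse} inside $R[[x;\sigma,\delta]]$, and that needs associativity there, not just finiteness of the coefficient sums.
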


\section{A $q$-Skew Amitsur's Theorem}

Recall that a nil ideal is one in which every element is nilpotent. In \cite{Amitsur_1956}, Amitsur proved the following well-known theorem.

\begin{thm*}(Amitsur's Theorem)
\label{amitsurs}
    Let $R$ be a ring. Then $$J(R[x]) = (J(R[x])\cap R)[x].$$ Moreover, $J(R[x])\cap R$ is a nil ideal of $R$.
\end{thm*}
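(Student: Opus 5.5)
This is the classical theorem of Amitsur, and the plan splits it into two parts: first show that $N = J(R[x])\cap R$ is nil, then show that $J(R[x])$ is spanned by its homogeneous components, which gives $J(R[x]) = N[x]$. Throughout I work in $R^1[x]$ where convenient, so that $x$ and its powers are available as multipliers. I have checked neither part in detail; in particular, I expect the second part's argument to be more delicate than this outline suggests.

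\emph{Nilness.} Let $a\in N$. Then $ax\in J(R[x])$, so $ax$ is quasi-invertible in $R[x]$, with quasi-inverse $s\in R[x]$. The formal power series $s' = \sum_{n\ge 1}(-1)^n a^n x^n \in R[[x]]$ also satisfies $ax + s' + ax s' = 0$. Since $R[x]\subseteq R[[x]]$, Lemma~\ref{UniquenessofQuasiInverse} applied in $R[[x]]$ gives $s = s'$. As $s$ is a polynomial, $a^n = 0$ for all large $n$. So $N$ is a nil ideal.

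\emph{Reduction.} Since $N\subseteq J(R[x])$ and $J(R[x])$ is an ideal of $R[x]$, we have $N[x]\subseteq J(R[x])$. Put $\bar R = R/N$. The image of $J(R[x])$ in $\bar R[x]\cong R[x]/N[x]$ is a quasi-regular ideal. I would next show that this image meets $\bar R$ trivially. The intended argument: if $a+N$ lies in the image, then $a\in J(R[x]) + N[x]$, and comparing constant terms should put $a$ in $N$, since $N[x]\subseteq J(R[x])$. It therefore suffices to prove the following claim: if a quasi-regular ideal $K$ of $S[x]$ satisfies $K\cap S = 0$, then $K = 0$.

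\emph{Homogeneity (main obstacle).} Suppose $K\neq 0$, and choose $0\neq f = \sum_i a_i x^i\in K$ with the fewest nonzero coefficients, say $f$ has degree $n$. For $r\in S$, the element $a_n r f - f r a_n$ (or a similar commutator such as $f r x^{j} - x^{j} r f$) lies in $K$ and has fewer nonzero coefficients. By minimality it vanishes, which yields strong commutation relations among the $a_i$. Combining these relations with quasi-invertibility of $fx$ should force $f$ to be a monomial $a x^m$. The comparison would use the power-series quasi-inverse $\sum (-1)^k (fx)^k$ together with Lemma~\ref{UniquenessofQuasiInverse}, exactly as in the nilness step. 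Once $f = a x^m$, a polynomial quasi-inverse for $a x^{m+1}$ shows that the ideal generated by $a$ is nil. Then $Sa S + Sa + aS + \mathbb{Z}a\subseteq K\cap S = 0$ after multiplying back, which is a contradiction.

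Making this minimal-length bookkeeping rigorous is the step I expect to be hardest. If it resists, the fallback is to show directly that $K$ is graded, which gives $K = (K\cap S)[x] = 0$. I would try this by passing to $S[x][t]$ and using the substitution $x\mapsto tx$, comparing coefficients in $t$ in the style of Bergman's argument for graded rings.
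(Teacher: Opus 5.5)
Your nilness argument is correct. It is the same power-series and uniqueness-of-quasi-inverse device the paper uses in Lemma~\ref{lemma}; the paper itself only cites Amitsur for this statement. The reduction to $\bar R=R/N$ is also fine. The gap is in what comes after it. You replace the problem by the claim that \emph{every} quasi-regular ideal $K$ of $S[x]$ with $K\cap S=0$ is zero, and this claim is false. Take $S=k[\varepsilon]/(\varepsilon^2)$ and $K=\varepsilon x S[x]$. Then $K^2=0$, so $K$ is quasi-regular. Every element of $K$ has zero constant term, so $K\cap S=0$. Yet $K\neq 0$.

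Your final step breaks exactly here. $\varepsilon x$ is a minimal-length monomial in $K$ and the ideal generated by $\varepsilon$ is nil, but $\varepsilon\notin K$; there is no way to ``multiply back'' from $ax^m$ to $a$. Similarly, $K'=\varepsilon(1+x)S[x]$ is quasi-regular with $K'\cap S=0$. Each nonzero element of $K'$ is $\varepsilon(1+x)g(x)$ with $0\neq g\in k[x]$, which is never a monomial (evaluate at $x=-1$). So the step ``commutation relations plus quasi-invertibility of $fx$ force $f$ to be a monomial'' also fails in the generality you use. Even in the true special case $K=J(S[x])$, passing from ``the ideal generated by $a$ is nil'' to ``$a\in J(S[x])$'' throws away the information $ax^m\in J$. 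As a general implication it is, by Krempa's theorem quoted in the introduction, equivalent to K\"othe's conjecture.

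The missing idea is that you must use that $K$ is the whole radical, in particular that it is stable under ring automorphisms of $S[x]$. Here is how this works when $R$ is an algebra over an infinite field $k$. Let $f=\sum_i a_ix^i\in J(R[x])$.
\begin{enumerate}
\item For every $\lambda\in k\setminus\{0\}$, the automorphism $x\mapsto\lambda x$ gives $f(\lambda x)\in J(R[x])$.
\item A Vandermonde argument, as in Lemma~\ref{lemma}, then gives $a_ix^i\in J(R[x])$ for each $i$.
\item The automorphism $x\mapsto x+1$ of $R^1[x]$ preserves $J(R[x])=R[x]\cap J(R^1[x])$, so $a_i(x+1)^i\in J(R[x])$.
\item Applying homogeneity once more puts its constant component $a_i$ in $J(R[x])\cap R$.
\end{enumerate}
For an arbitrary ring there are not enough such automorphisms, and a genuine extra ingredient is needed: Amitsur's original argument, or for instance adjoining roots of unity in a finite centralizing extension.

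Your fallback via $x\mapsto tx$ into $S[x][t]$ points in this direction. However, that map is not surjective, so you would first have to show it carries $J(S[x])$ into a radical ideal, and that is essentially the content of the theorem. It also cannot be run for an arbitrary quasi-regular $K$, since $K'$ above is not graded.
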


We would like to generalise this to Ore extensions, that is,

\begin{enumerate}
    \item $J(R[x;\sigma,\delta]) = (J(R[x;\sigma,\delta])\cap R)[x;\sigma,\delta]$;
    \item and $J(R[x,\sigma,\delta])\cap R$ is a nil ideal of $R$.
\end{enumerate}
We call (1) `Amitsur's property'. If both (1) \& (2) hold we say that a skew Amitsur's theorem holds.

In order for the above to even be well-defined, we note the following result of Shin. Recall that we say $\delta$ is a \textbf{$q$-skew} $\sigma$-derivation of an algebra over a field $k$ if there exists a $q\in k\backslash\{0\}$ such that $q\sigma\delta = \delta\sigma$.

\begin{lem*}\cite[Lemma 1 \& 2]{shin2024jacobsonradicalsoreextensions}
    Let $R[x;\sigma,\delta]$ be an Ore extension such that $\sigma$ is an automorphism and $\delta$ is $q$-skew. Then $N = J(R[x;\sigma,\delta])\cap R$ is a $(\sigma,\delta)$-stable ideal. That is, $\sigma(N)\subseteq N$ and $\delta(N)\subseteq N$.
\end{lem*}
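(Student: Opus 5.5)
The plan is to realise $\sigma$ as the restriction of an automorphism of $S = R[x;\sigma,\delta]$ that maps $R$ onto itself. Since automorphisms preserve the Jacobson radical, this will give $\sigma(N)\subseteq N$. For $\delta$ I will use the commutation rule $xa=\sigma(a)x+\delta(a)$ directly. First note that $N = J(S)\cap R$ is an ideal of $R$, because $J(S)$ is an ideal of $S$ and $R$ is a subring of $S$. Here $S$ is the free left $R$-module on $1,x,x^2,\dots$, so $x\in S$; I take $R$ unital, as that basis description implicitly does.

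\textbf{Step 1: extend $\sigma$ to $S$.} Define $\tilde\sigma\colon S\to S$ by
\[
\tilde\sigma\Big(\sum_i a_ix^i\Big)=\sum_i \sigma(a_i)\,q^{-i}x^i ,
\]
so that $\tilde\sigma|_R=\sigma$ and $\tilde\sigma(x)=q^{-1}x$. By the universal property of Ore extensions, $\tilde\sigma$ is a well-defined ring endomorphism provided the images respect the defining relation $xr=\sigma(r)x+\delta(r)$. That is, we need
\[
\tilde\sigma(x)\sigma(r)=\sigma^2(r)\tilde\sigma(x)+\sigma(\delta(r)) \quad\text{for all } r\in R.
\]
Using $\delta\sigma=q\sigma\delta$, the left-hand side is
\[
q^{-1}x\sigma(r)=q^{-1}\big(\sigma^2(r)x+\delta\sigma(r)\big)=\sigma^2(r)\,q^{-1}x+\sigma\delta(r),
\]
which is exactly the right-hand side. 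This is the one place where the $q$-skew hypothesis enters, and it forces the scalar $q^{-1}$.

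In the same way, $\sigma^{-1}$ together with $x\mapsto qx$ defines an endomorphism. Its compatibility check is the previous identity with $\sigma$ replaced by $\sigma^{-1}$, using $\delta\sigma^{-1}=q^{-1}\sigma^{-1}\delta$, which follows from $q\sigma\delta=\delta\sigma$. This endomorphism is inverse to $\tilde\sigma$ on the generators $R\cup\{x\}$, so $\tilde\sigma$ is an automorphism of $S$.

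\textbf{Step 2: $\sigma$-stability.} Any ring automorphism maps maximal left ideals to maximal left ideals, so it preserves $J(S)$. Hence $\tilde\sigma(J(S))=J(S)$, and also $\tilde\sigma(R)=\sigma(R)=R$. For $a\in N$ we get $\sigma(a)=\tilde\sigma(a)\in J(S)\cap R=N$.

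\textbf{Step 3: $\delta$-stability.} Let $a\in N$. Since $J(S)$ is a two-sided ideal, $xa\in J(S)$. By Step 2, $\sigma(a)\in N\subseteq J(S)$, so $\sigma(a)x\in J(S)$ as well. Therefore
\[
\delta(a)=xa-\sigma(a)x\in J(S)\cap R=N.
\]

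The only real obstacle is Step 1: checking that $\tilde\sigma$ is well defined and bijective. The $q$-skew condition is exactly what makes $x\mapsto q^{-1}x$ compatible with the Ore relation; without it, $\sigma$ need not extend to $S$. The rest is formal.
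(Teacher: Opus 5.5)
Your argument is correct for unital $R$. The paper gives no proof of this lemma; it only cites Shin's Lemmas 1 and 2, so there is no in-paper argument to compare against. What you wrote is the standard self-contained proof:
\begin{itemize}
\item the $q$-skew relation is exactly what lets $\sigma$ extend to an automorphism $\tilde\sigma$ of $S$ with $\tilde\sigma(x)=q^{-1}x$;
\item ring automorphisms preserve $J(S)$;
\item the identity $\delta(a)=xa-\sigma(a)x$ then turns $\sigma$-stability into $\delta$-stability.
\end{itemize}
Your checks of the Ore relation for $q^{-1}x$, and for the inverse ($\sigma^{-1}$ with $x\mapsto qx$, using $q\delta\sigma^{-1}=\sigma^{-1}\delta$), are right. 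Note that $\delta$-stability needs the $\sigma$-stability step, since you need $\sigma(a)x\in J(S)$.

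The one thing to tighten is unitality. The paper explicitly allows rings without $1$. In that case $x\notin S$, so Step 3 and your appeal to the universal property do not apply literally. This is easy to repair:
\begin{itemize}
\item Pass to the $k$-unitization $R^1=k\oplus R$. Extend $\sigma$ by $\sigma(1)=1$ and $\delta$ by $\delta(1)=0$; the result is still a $q$-skew $\sigma$-derivation.
\item Set $T=R^1[x;\sigma,\delta]$. Then $S=R[x;\sigma,\delta]$ is a two-sided ideal of $T$.
\item By heredity of the Jacobson radical on ideals, $J(S)=S\cap J(T)$, so $N=J(T)\cap R$.
\item Run your argument in the unital ring $T$. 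The extended $\tilde\sigma$ still satisfies $\tilde\sigma(R)=R$, so you get $\sigma(N)\subseteq N$ and $\delta(N)\subseteq N$ in full generality.
\end{itemize}
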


We will need the following useful formulas that can be found in \cite[Section 2.5]{Goodearlformula}.

\begin{prop}\cite[Section 2.5]{Goodearlformula}
\label{GoodearlFormula}
    Let $\sigma$ be an endomorphism of $R$ and $\delta$ a $q$-skew $\sigma$-derivation on $R$. Then, for any $a,b \in R$ and $n \in \mathbb{N}$,\[
        x^na = \sum_{i=0}^n \binom{n}{i}_q \sigma^i\delta^{n-i}(a)x^i
    \]
    and\[
        \delta^n(ab)=\sum_{i=0}^n\binom{n}{i}_q \sigma^{n-i}\delta^i(a)\delta^{n-i}(b),
    \] where $\binom{n}{i}_q$ is the evaluation at $t=q$ of the polynomial function \[
        \binom{n}{i}_t = \frac{(t^n-1)(t^{n-1}-1)\cdots(t-1)}{(t^i-1)(t^{i-1}-1)\cdots(t-1)(t^{n-i}-1)(t^{n-i-1}-1)\cdots(t-1)},
    \]called a \textbf{$q$-binomial coefficient}. Note that, for $q=1$, we recover the usual binomial coefficients.
\end{prop}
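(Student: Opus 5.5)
Both identities will be proved by induction on $n$. The only input from the hypotheses is the commutation rule $\delta\sigma = q\sigma\delta$, which by an immediate induction gives
\[
\delta\sigma^m = q^m\sigma^m\delta \qquad (m\ge 0).
\]
The base case $n=0$ is trivial in both identities, and $n=1$ is exactly the defining relation $xa=\sigma(a)x+\delta(a)$, respectively the twisted Leibniz rule. The whole argument reduces to the two $q$-Pascal identities
\[
\binom{n+1}{j}_t=\binom{n}{j-1}_t+t^{j}\binom{n}{j}_t=t^{\,n+1-j}\binom{n}{j-1}_t+\binom{n}{j}_t ,
\]
with the conventions $\binom{n}{-1}_t=\binom{n}{n+1}_t=0$.

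For the first formula, assume it holds for $n$ and multiply on the left by $x$. Write $c_i=\binom{n}{i}_q$. Using $x\,r x^i=\sigma(r)x^{i+1}+\delta(r)x^i$ with $r=c_i\sigma^i\delta^{n-i}(a)$, each term contributes
\[
c_i\,\sigma^{i+1}\delta^{n-i}(a)\,x^{i+1}\;+\;c_i\,\delta\sigma^{i}\delta^{n-i}(a)\,x^i,
\]
and the second piece equals $c_i\,q^i\,\sigma^i\delta^{n+1-i}(a)\,x^i$. Collecting the coefficient of $\sigma^j\delta^{n+1-j}(a)x^j$ gives $\binom{n}{j-1}_q+q^j\binom{n}{j}_q$. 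By the first $q$-Pascal identity this is $\binom{n+1}{j}_q$.

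For the second formula, apply $\delta$ to the inductive expression $\sum_i c_i\,\sigma^{n-i}\delta^i(a)\,\delta^{n-i}(b)$. The twisted Leibniz rule, with $\delta\sigma^{n-i}=q^{n-i}\sigma^{n-i}\delta$, splits each term into
\[
c_i\,\sigma^{n+1-i}\delta^i(a)\,\delta^{n+1-i}(b)\;+\;c_i\,q^{n-i}\,\sigma^{n-i}\delta^{i+1}(a)\,\delta^{n-i}(b).
\]
The coefficient of $\sigma^{n+1-j}\delta^j(a)\,\delta^{n+1-j}(b)$ is then $\binom{n}{j}_q+q^{n+1-j}\binom{n}{j-1}_q$. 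This equals $\binom{n+1}{j}_q$ by the second $q$-Pascal identity.

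The main point needing care is the $q$-Pascal identities themselves, together with the fact that evaluation at $t=q$ is legitimate even when $q$ is a root of unity. I would prove both identities as identities of rational functions in $t$. For the first, factor out the common part of the three terms and reduce to
\[
t^{n+1}-1=(t^{j}-1)+t^{j}(t^{n+1-j}-1).
\]
The second identity follows from the first via the symmetry $\binom{n}{j}_t=\binom{n}{n-j}_t$, or directly from $t^{n+1}-1=(t^{n+1-j}-1)+t^{n+1-j}(t^j-1)$.

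Induction on $n$ using the first identity, with $\binom{n}{0}_t=\binom{n}{n}_t=1$, then shows that each $\binom{n}{i}_t$ is actually a polynomial in $t$ with integer coefficients. Both identities therefore hold in $\mathbb{Z}[t]$, and one may specialize to $t=q\in k$ with no division by zero. This also covers $q=1$, where we recover the usual binomial coefficients.
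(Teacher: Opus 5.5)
The paper does not prove this proposition; it only cites Goodearl--Letzter, so your argument is a self-contained replacement for that citation. It is correct. The relation $\delta\sigma^m=q^m\sigma^m\delta$ follows from $q\sigma\delta=\delta\sigma$. Your reindexing gives exactly $\binom{n}{j-1}_q+q^j\binom{n}{j}_q$ for $x^{n+1}a$ and $\binom{n}{j}_q+q^{n+1-j}\binom{n}{j-1}_q$ for $\delta^{n+1}(ab)$. The two $q$-Pascal rules reduce to $t^{n+1}-1=(t^j-1)+t^j(t^{n+1-j}-1)$ and its mirror image, and your conventions $\binom{n}{-1}_t=\binom{n}{n+1}_t=0$ cover the boundary cases $j=0$ and $j=n+1$.

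The citation buys brevity. Your route buys two things the paper leaves implicit. First, you show that $\binom{n}{i}_t\in\mathbb{Z}[t]$. This is what justifies the phrase ``evaluation at $t=q$ of the polynomial function'' when $q$ is a root of unity, in particular $q=1$, where every factor $t^m-1$ vanishes. Second, you make visible which hypotheses are actually used: only the $q$-skew relation and the standing $k$-linearity of $\sigma$ and $\delta$, which lets the scalars $\binom{n}{i}_q$ and $q^i$ pass through. It also shows that $\sigma$ need only be an endomorphism, as in the statement.
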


Iterating this formula we obtain the following.

\begin{lem}
    \label{iteratedgoodearlformula}Let $R$ be a $k$-algebra where $k$ is a field, $\sigma$ a $k$-algebra automorphism, and $\delta$ a $k$-linear
    $\sigma$-derivation which is $q$-skew. Then, for any $a \in R$ and $n,m \in \mathbb{N}$, we have\[
    (ax^n)^m = \sum_{i_1,\dots,i_{m-1} = 0}^n aF_{i_{m-1}}(aF_{i_{m-2}}(\cdots aF_{i_1}(a))))x^{i_1+i_2+\cdots+i_{m-1}+n},
    \] where we define the map $F_i:R \to R$ by \[
        F_i(a):= \binom{n}{i}_q\sigma^i\delta^{n-i}(a).
    \]
\end{lem}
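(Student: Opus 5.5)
We prove the formula by induction on $m$, using only the first identity of Proposition~\ref{GoodearlFormula}. With $F_i(a)=\binom{n}{i}_q\sigma^i\delta^{n-i}(a)$, that identity reads
\[
x^n b=\sum_{i=0}^n F_i(b)\,x^i \qquad \text{for every } b\in R.
\]
This is the only commutation rule needed to move powers of $x$ to the right.

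For $m=1$ the sum over $i_1,\dots,i_{m-1}$ is empty. The right-hand side is then just $ax^n$, so there is nothing to prove. For $m=2$ the identity gives
\[
(ax^n)(ax^n)=a\,(x^n a)\,x^n=\sum_{i_1=0}^n aF_{i_1}(a)\,x^{i_1+n},
\]
which is the claimed formula.

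For the inductive step, suppose the formula holds for $m$, and multiply it on the left by $ax^n$:
\[
(ax^n)^{m+1}=\sum_{i_1,\dots,i_{m-1}=0}^n a\,x^n\,\bigl(aF_{i_{m-1}}(\cdots aF_{i_1}(a))\bigr)\,x^{i_1+\cdots+i_{m-1}+n}.
\]
Set $b=aF_{i_{m-1}}(\cdots aF_{i_1}(a)))$, which is an element of $R$, and apply the identity to $x^n b$. This produces a new summation index $i_m\in\{0,\dots,n\}$ and the term $aF_{i_m}(b)\,x^{i_m}\,x^{i_1+\cdots+i_{m-1}+n}$. Combining the powers of $x$ gives exactly the formula for $m+1$.

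Multiplying on the left, rather than on the right, is the right choice here. The new $F$ is then applied to the whole previously built coefficient, so the nesting order $F_{i_m}$ outermost is reproduced, and no $x$ ever has to pass through a polynomial.

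The only step needing care is this bookkeeping of indices and of the nesting order. Everything else is linearity of the sums together with the $R$-bimodule structure, namely $(\sum r_j x^{j})x^k=\sum r_j x^{j+k}$. The $q$-skew hypothesis and the fact that $\sigma$ is an automorphism enter only through Proposition~\ref{GoodearlFormula}. The field $k$ is used only so that the scalars $\binom{n}{i}_q$ make sense and commute with everything.
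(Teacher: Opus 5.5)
Your proof is correct and follows essentially the same route as the paper: induction on $m$, writing $(ax^n)^{m+1}=(ax^n)(ax^n)^m$ and applying the first formula of Proposition~\ref{GoodearlFormula} to $x^n b$, where $b$ is the previously nested coefficient, so that the new $F_{i_m}$ lands outermost. Your treatment of $m=1$ as trivial, with $m=2$ as the first real use of the commutation rule, is if anything slightly more precise than the paper's base case.
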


\begin{proof}
    We proceed by induction on $m$. For $m=1$, the result follows immediately by Proposition \ref{GoodearlFormula}. Similarly, by Proposition \ref{GoodearlFormula}\begin{align*}
        &(ax^n)^{m+1} = (ax^n)(ax^n)^m \\&=  ax^n\sum_{i_1,\dots,i_{m-1} = 0}^n aF_{i_{m-1}}(aF_{i_{m-2}}(aF_{i_{m-3}}(\cdots aF_{i_1}(a))))x^{i_1+i_2+\cdots+i_{m-1}+n}\\&= \sum^n_{i_1,\dots,i_{m-1},i_m = 0}aF_{i_{m}}(aF_{i_{m-1}}(aF_{i_{m-2}}(\cdots aF_{i_1}(a))))x^{i_1+i_2+\cdots+i_{m}+n}.
    \end{align*}
\end{proof}

We also notice some bounds on the nilpotency degree of elements of $R$. For any $a \in R$ and a locally nilpotent derivation $\delta$ on $R$, we denote by $\text{nildeg}(a) = N$ the number such that $\delta^N(a) = 0$ and $\delta^{N-1}(a) \neq 0$. 

\begin{lem}
    \label{nildeglem}
    Let $R$ be a $k$-algebra where $k$ is a field, $\sigma$ a $k$-algebra automorphism, and $\delta$ a $k$-linear
    $\sigma$-derivation which is locally nilpotent and $q$-skew. Let $a,b \in R$ and $N_a = \text{nildeg}(a), N_b = \text{nildeg}(b)$. Then \begin{enumerate}
        \item \label{nildeg1}$\text{nildeg}(a+b)\leq \max\{N_a,N_b\}$;
        \item\label{nildeg3}$\text{nildeg}(\lambda\sigma^s\delta^d(a))\leq\max\{0,N_a-d\}$;
        \item\label{nildeg2}$\text{nildeg}(ab)\leq N_a+N_b  $;
    \end{enumerate}
    for any $\lambda\in k$ and $s,d \in \mathbb{N}$.
\end{lem}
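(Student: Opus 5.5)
The plan is to argue each part directly from the definition of $\text{nildeg}$, using only linearity, the $q$-skew relation, and the $q$-Leibniz formula in Proposition~\ref{GoodearlFormula}. Throughout, $\text{nildeg}(c)$ is the least $M$ with $\delta^M(c)=0$, and $\text{nildeg}(0)=0$. So to show $\text{nildeg}(c)\le M$ it suffices to show $\delta^M(c)=0$: minimality of $\text{nildeg}(c)$ then gives the bound. All three parts reduce to exhibiting such an $M$.

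For \eqref{nildeg1}, set $M=\max\{N_a,N_b\}$. By additivity, $\delta^M(a+b)=\delta^M(a)+\delta^M(b)$. Both terms vanish, because $\delta^{j}(a)=0$ for all $j\ge N_a$ (apply $\delta$ further to $\delta^{N_a}(a)=0$), and likewise for $b$.

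For \eqref{nildeg3}, first commute powers of $\delta$ past powers of $\sigma$. The relation $\delta\sigma=q\sigma\delta$, iterated, gives
\[
\delta^m\sigma^s = q^{ms}\sigma^s\delta^m
\]
by a double induction on $m$ and $s$. By $k$-linearity of $\sigma$ and $\delta$,
\[
\delta^m\bigl(\lambda\sigma^s\delta^d(a)\bigr)=\lambda q^{ms}\sigma^s\delta^{m+d}(a).
\]
This is zero once $m+d\ge N_a$, that is, for $m=\max\{0,N_a-d\}$. This is the only place the $q$-skew hypothesis enters in an essential way; without it one cannot move $\delta^m$ past $\sigma^s$.

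For \eqref{nildeg2}, apply the second formula of Proposition~\ref{GoodearlFormula} with $n=N_a+N_b$:
\[
\delta^{n}(ab)=\sum_{i=0}^{n}\binom{n}{i}_q\sigma^{n-i}\delta^i(a)\,\delta^{n-i}(b).
\]
In each term, either $i\ge N_a$, so $\delta^i(a)=0$ and hence $\sigma^{n-i}\delta^i(a)=0$, or $i<N_a$, in which case $n-i>N_b$ and $\delta^{n-i}(b)=0$. Every term therefore vanishes and $\text{nildeg}(ab)\le N_a+N_b$. In fact $n=N_a+N_b-1$ already works, but the weaker bound is all that is claimed.

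I do not expect a genuine obstacle. The only steps requiring care are the commutation identity $\delta^m\sigma^s=q^{ms}\sigma^s\delta^m$ and the degenerate cases $a=0$ or $b=0$. The latter are covered by the convention $\text{nildeg}(0)=0$, and the inequalities then hold trivially.
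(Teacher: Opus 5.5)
Your proof is correct and follows the paper's argument: part (1) by additivity, part (2) via the $q$-skew relation $\delta\sigma=q\sigma\delta$ (you write it in closed form as $\delta^m\sigma^s=q^{ms}\sigma^s\delta^m$, where the paper inducts on single applications of $\sigma$ and $\delta$), and part (3) by the $q$-Leibniz formula with $n=N_a+N_b$. One small correction to your commentary: part (2) is not the only place $q$-skewness is essential, because the $q$-Leibniz formula you invoke in part (3) is itself only valid for $q$-skew $\delta$.
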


\begin{proof}
\begin{enumerate}
    \item 

    Firstly,\[
        \delta^{\max\{N_a,N_b\}}(a+b) = \delta^{\max\{N_a,N_b\}}(a)+\delta^{\max\{N_a,N_b\}}(b) = 0,
    \] which proves the inequality.

    \item We only need to show that $\text{nildeg}(\sigma(a))\leq N_a$ and similarly $\text{nildeg}(\delta(a))\leq N_a-1$ so that the result follows by induction on the degree of $\lambda\sigma^s\delta^d$. Note that \[
        \delta^{N_a-1}(\delta(a)) = \delta^{N_a}(a)=0.
    \] Moreover,\[
        \delta^{N_a}(\sigma(a)) = \delta^{N_a-1}(q\sigma\delta(a)) = q \delta^{N_a-1}(\sigma\delta(a)) = q^{N_a}\sigma(\delta^{N_a}(a)) = 0,
    \]as required.

    \item By the product formula in Proposition \ref{GoodearlFormula},\[
        \delta^{N_a+N_b}(ab) = \sum_{i=0}^{N_a+N_b}\binom{N_a+N_b}{i}_q\sigma^{N_a+N_b-i}\delta^i(a)\delta^{N_a+N_b-i}(b) = 0,
    \] which vanishes since for all $i$ we either have that $i \geq N_a$ or we have that $N_a+N_b-i \geq N_b$. 

    \end{enumerate}
    
\end{proof}

For $f \in R[x;\sigma,\delta]$, let $(f)_k$ denote the (left) coefficient at $x^k$ in $f$.

\begin{lem}
    \label{mexists}  Let $R$ be a $k$-algebra where $k$ is a field, $\sigma$ a $k$-algebra automorphism, and $\delta$ a $k$-linear
    $\sigma$-derivation which is locally nilpotent and $q$-skew. Let $a \in R$ and $n$ be some large natural number such that $\sum_{i=1}^\infty (ax^n)^i$ has finite degree $d$ and $n > \text{nildeg}(a)$. Then for all sufficiently large $m$ we have that \[
        ((ax^n)^m)_{dn} = 0.
    \]
\end{lem}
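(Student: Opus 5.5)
The plan is to expand $(ax^n)^m$ with Lemma \ref{iteratedgoodearlformula} and show that every summand contributing to $x^{dn}$ vanishes once $m$ is large. The vanishing will come from the nilpotency-degree bookkeeping of Lemma \ref{nildeglem}. The hypothesis that $\sum_{i\ge 1}(ax^n)^i$ has finite degree $d$ is used only to fix the integer $d$. The real input is the inequality $n>N:=\text{nildeg}(a)$.

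\textbf{Step 1: isolate the relevant summands.} By Lemma \ref{iteratedgoodearlformula},
\[
(ax^n)^m=\sum_{i_1,\dots,i_{m-1}=0}^n aF_{i_{m-1}}(\cdots aF_{i_1}(a))\,x^{i_1+\cdots+i_{m-1}+n}.
\]
Because this is already written in left normal form, $((ax^n)^m)_{dn}$ is the sum of the coefficients over all tuples with $i_1+\cdots+i_{m-1}=(d-1)n$. For such a tuple, set $e_j=n-i_j\ge 0$. Then
\[
\sum_{j=1}^{m-1} e_j=(m-1)n-(d-1)n=(m-d)n .
\]
So the total number of applications of $\delta$ in the nested expression is $(m-d)n$, which grows linearly in $m$.

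\textbf{Step 2: track the nilpotency degree along the nesting.} Put $y_0=a$ and $y_j=aF_{i_j}(y_{j-1})=a\binom{n}{i_j}_q\sigma^{i_j}\delta^{e_j}(y_{j-1})$. By parts (\ref{nildeg3}) and (\ref{nildeg2}) of Lemma \ref{nildeglem},
\[
\text{nildeg}(y_j)\le N+\max\{0,\text{nildeg}(y_{j-1})-e_j\}.
\]
Here we use the convention that $\text{nildeg}(0)=0$. Note that $\text{nildeg}(y)=0$ forces $y=\delta^0(y)=0$.

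Suppose some $y_{j-1}$ satisfies $\text{nildeg}(y_{j-1})\le e_j$. Then $\delta^{e_j}(y_{j-1})=0$, so $y_j=0$, and every later $y$ is $0$ as well. Otherwise the maximum is never truncated, and induction gives
\[
\text{nildeg}(y_{m-1})\le mN-\sum_j e_j = mN-(m-d)n = dn-m(n-N).
\]

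\textbf{Step 3: conclude.} Since $n>N$, the right-hand side is negative as soon as $m>dn/(n-N)$. A nilpotency degree cannot be negative, so the untruncated case is impossible for such $m$. Hence each $y_{m-1}$ in Step 1 is zero, and therefore $((ax^n)^m)_{dn}=0$. The threshold $dn/(n-N)$ depends only on $a$, $n$ and $d$, not on the tuple, so a single bound on $m$ works for all summands.

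The main point of care is the truncation by $\max\{0,\cdot\}$ in Lemma \ref{nildeglem}(\ref{nildeg3}). One cannot simply add up the inequalities, because a step with large $e_j$ could in principle "waste" excess $\delta$'s. The dichotomy in Step 2 handles this: a truncation already kills the term, and otherwise the inequalities telescope exactly. A secondary point is that the summands with $i_j=n$ contribute $e_j=0$ and add $N$ with no reduction. This is harmless, since only the total $\sum_j e_j$ enters the bound.
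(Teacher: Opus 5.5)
Your proposal is correct and matches the paper's proof. You expand via Lemma \ref{iteratedgoodearlformula}, restrict to tuples with $i_1+\cdots+i_{m-1}=(d-1)n$, and telescope the bounds of Lemma \ref{nildeglem} to get $0\le \text{nildeg}(y_{m-1})\le dn-m(n-N)$, which forces $m\le dn/(n-N)$. Your explicit dichotomy for the $\max\{0,\cdot\}$ truncation is a useful addition: the paper drops the truncation silently, which is justified only because it assumes the summand is nonzero, and your argument makes that precise.
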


\begin{proof}
    By Lemma \ref{iteratedgoodearlformula}, we have that \[
        ((ax^n)^m)_{dn} = \sum_{\substack{i_1,\dots,i_{m-1} = 0\\i_1+i_2+\cdots+i_{m-1} +n = dn}}^n aF_{i_{m-1}}(aF_{i_{m-2}}(aF_{i_{m-3}}(\cdots aF_{i_1}(a)))).
    \]
    In particular, the indices in the sum satisfy\[
        i_1+i_2+\cdots+i_{m-1} = (d-1)n.
    \]
    
    Suppose that $((ax^n)^m)_{dn} \neq 0$. Then at least one term in the sum must be nonzero. We show that $m$ is uniformly bounded from above which will prove our claim. 
    
    Let $N = \text{nildeg}(a)$ and put $$N_j := \text{nildeg}(aF_{i_{j}}(aF_{i_{j-1}}(aF_{i_{j-2}}(\cdots aF_{i_1}(a)))))$$ for $1 \leq j \leq m-1$ with $N_0 := N$. By Lemma \ref{nildeglem}, we have that \begin{align*}
        N_j &\leq N + \text{nildeg}(F_{i_{j}}(aF_{i_{j-1}}(\cdots aF_{i_1}(a)))) \\&\leq N+N_{j-1}-(n-i_j) \\&=N+N_{j-1}-n+i_{j}.\end{align*}

    It follows by iterating that \begin{align*}
        N_j &\leq N-n+i_j+N_{j-1} \\&\leq N-n+i_j +N-n+i_{j-1}+N_{j-2}\\&= (j+1)N-jn+\sum_{k=1}^ji_k\end{align*}for all $1\leq j \leq m-1$.

    Noting that $N_j \geq 0$ for all $j$, we have \begin{align*}
        0\leq N_{m-1}&\leq mN-(m-1)n+\sum_{k=1}^{m-1}i_k \\&= mN- (m-1)n+(d-1)n \\&= -m(n-N)+dn.
    \end{align*} Thus, since $n-N > 0$ by assumption,\[
        m(n-N)\leq dn \implies m \leq \frac{dn}{n-N}, 
    \] as required. 
\end{proof}

We are now ready to begin the proof of Theorem \ref{theorem}. The proof follows the ideas of \cite[Proposition 2.1]{SMOKTUNOWICZuncountablefield}.

\begin{lem}
\label{lemma}
    Let $R$ be a $k$-algebra where $k$ is an uncountable field, $\sigma$ a $k$-algebra automorphism, and $\delta$ a $k$-linear
    $\sigma$-derivation which is locally nilpotent and $q$-skew. Let $a \in R$ such that there exists $t \in \mathbb{N}$ with $ax^{t} \in J(R[x;\sigma,\delta])$. Then there exists some $d \in \mathbb{N}$ such that $a\sigma^n(a)\sigma^{2n}(a)\cdots\sigma^{(d-1)n}(a) = 0$ for any $n \geq t$ such that $n > \text{nildeg}(a)$.
\end{lem}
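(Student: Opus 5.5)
The plan is to follow Smoktunowicz and Ziembowski: realise the quasi-inverse of $\lambda a x^n$ as a power series, use uniqueness of quasi-inverses to force it to be a polynomial, and then use uncountability of $k$ to read off the vanishing of the top coefficient.

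\textbf{Step 1: a family of radical elements.} Fix $n \geq t$ with $n > N := \text{nildeg}(a)$. Since $J(R[x;\sigma,\delta])$ is an ideal, $a x^n = (ax^t)x^{n-t}$ lies in $J(R[x;\sigma,\delta])$; if $R$ has no $1$, I would run this inside the Ore extension of $R^1$, where $J$ is still an ideal containing $ax^t$. Hence $\lambda a x^n \in J(R[x;\sigma,\delta])$ for every $\lambda \in k$, so each $\lambda a x^n$ has a quasi-inverse $g_\lambda \in R[x;\sigma,\delta]$.

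\textbf{Step 2: the quasi-inverse as a power series.} By \cite[Theorem 5.3]{GreenfieldSmoktunowiczZiembowski}, $R[[x;\sigma,\delta]]$ is a ring, since $\delta$ is locally nilpotent. I claim $s_\lambda := \sum_{m\geq 1}(-\lambda)^m (ax^n)^m$ is a genuine element of it. This needs, for each fixed $k$, that $((ax^n)^m)_k = 0$ for all large $m$. I would get this from the nildeg computation in the proof of Lemma \ref{mexists}, run with target degree $k$ in place of $dn$. If the coefficient at $x^k$ is nonzero, the indices satisfy $i_1+\cdots+i_{m-1}+n = k$, and Lemma \ref{nildeglem} then gives
\[
0 \leq N_{m-1} \leq mN - (m-1)n + (k-n) = -m(n-N) + k .
\]
So $m \leq k/(n-N)$, using $n > N$. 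A direct check gives $\lambda a x^n + s_\lambda + \lambda a x^n s_\lambda = 0$ in $R[[x;\sigma,\delta]]$. By Lemma \ref{UniquenessofQuasiInverse}, applied in $R[[x;\sigma,\delta]]$ where $g_\lambda$ is also a quasi-inverse, $s_\lambda = g_\lambda$. In particular $s_\lambda$ has finite degree $D_\lambda$.

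\textbf{Step 3: uncountability and the top coefficient.} The map $\lambda \mapsto D_\lambda$ takes only countably many values and $k$ is uncountable, so there is an uncountable (in particular infinite) set $\Lambda \subseteq k$ with $D_\lambda = D$ for all $\lambda \in \Lambda$. Put $d := D+1$ and look at the coefficient of $x^{dn}$ in $s_\lambda$, which is
\[
P(\lambda) := \sum_{m} (-\lambda)^m ((ax^n)^m)_{dn}.
\]
By Lemma \ref{iteratedgoodearlformula}, $(ax^n)^m$ has degree at most $mn$, so only $m \geq d$ contribute. By Step 2, only $m \leq dn/(n-N)$ contribute. Thus $P$ is a polynomial in $\lambda$ with coefficients in $R$, and it vanishes on the infinite set $\Lambda$ because $dn > D$.

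A Vandermonde argument with finitely many distinct $\lambda \in \Lambda$ then shows that every coefficient of $P$ is zero. The coefficient of $\lambda^d$ is $\pm((ax^n)^d)_{dn}$. In Lemma \ref{iteratedgoodearlformula}, the only summand reaching $x^{dn}$ is $i_1 = \cdots = i_{d-1} = n$, where each $F_n = \sigma^n$. So this coefficient equals $a\sigma^n(a)\sigma^{2n}(a)\cdots\sigma^{(d-1)n}(a)$, which therefore vanishes.

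\textbf{Main obstacle.} The delicate point is Step 2: showing that the formal geometric series really defines an element of $R[[x;\sigma,\delta]]$, equivalently that low-degree coefficients stabilise. This is where the hypothesis $n > \text{nildeg}(a)$ and the bounds of Lemma \ref{nildeglem} are essential.

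A secondary issue is the quantifier on $d$. The argument produces $d = d(n)$, which is all that is needed later. If a $d$ uniform in $n$ were really required, I would try to enlarge $\Lambda$-type arguments over the countable set of admissible $n$ simultaneously, but I expect the $n$-dependent version to be what is intended.
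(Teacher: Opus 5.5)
Your proposal is correct and follows the paper's proof essentially step for step. The steps match: the power-series quasi-inverse of $\lambda a x^n$, uniqueness of quasi-inverses, pigeonholing the degrees over the uncountable field, the nildeg bound of Lemma \ref{mexists}, and a Vandermonde argument. Your additional care in Step 2 fills a point the paper passes over quickly: you run the nildeg bound at an arbitrary degree to show the geometric series really lies in $R[[x;\sigma,\delta]]$. Your observation that $d$ depends on $n$ is also accurate, since the paper's argument establishes only that version, and it is all that Theorem \ref{theorem} uses.
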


\begin{proof}
     Fix an arbitrarily large natural number $n \geq t$ with $n > \text{nildeg(a)}$. Then since $ax^{t} \in J(R[x;\sigma,\delta])$ so is $ax^n$. Thus, $ax^n$ has quasi-inverse in $R[x;\sigma,\delta]$. Note that it has a quasi-inverse in $R[[x;\sigma,\delta]]$ given by  \[
        f = \sum_{i=1}^\infty (-ax^n)^i.
    \] So, by Lemma \ref{UniquenessofQuasiInverse}, these two inverses are equal. Thus, $$f \in R+Rx+Rx^2+\cdots+Rx^{d_1-1},$$ for some $d_1$. 
    
    Similarly, we obtain that the quasi-inverses of $\lambda a x^n$ are $f_\lambda$, where $$f_\lambda = \sum_{i=1}^\infty (-\lambda ax^n)^i \in R+Rx+Rx^2+\cdots+Rx^{d_\lambda-1},$$ for some $d_\lambda$ and all $\lambda \in k$. Then, considering \[
        k = \bigcup_{d=0}^\infty \{\lambda \in k:\deg(f_\lambda) = d-1\}
    \] and that $k$ is uncountable it follows that, for some $d$, we have infinitely many $\lambda \in k$ such that $f_\lambda \in R+Rx+\cdots+Rx^{d-1}$. Hence, the $x^{dn}$ coefficient of $f_\lambda$ is zero for infinitely many $\lambda \in k$.
    
    Note that \[
        (f_\lambda)_{dn} = ((-\lambda ax^n)^d)_{dn}+((-\lambda ax^n)^{d+1})_{dn}+\cdots +((-\lambda ax^n)^m)_{dn} = 0
    \] and such an $m$ exists by Lemma \ref{mexists}. 

    Since $\delta$ and $\sigma$ are $k$-linear, \[
        (-\lambda)^d(( ax^n)^d)_{dn}+(-\lambda)^{d+1}(( ax^n)^{d+1})_{dn}+\cdots +(-\lambda)^{m}(( ax^n)^m)_{dn}  = 0.
    \] Since this is true for infinitely many $\lambda$, by a Vandermonde matrix argument, we have that $(( ax^n)^d)_{dn} = a\sigma^n(a)\sigma^{2n}(a)\cdots\sigma^{(d-1)n}(a) = 0.$
\end{proof}

\begin{theorem1}
    Let $R$ be a $k$-algebra where $k$ is an uncountable field, $\sigma$ a locally torsion $k$-algebra automorphism, and $\delta$ a $k$-linear $\sigma$-derivation which is locally nilpotent and $q$-skew. Then $$I = \{a\in R:ax^{t}\in J(R[x;\sigma,\delta])\}$$ is a nil ideal of $R$ for any $t \in \mathbb{N}$.
\end{theorem1}

\begin{proof}
    Let $a \in I$. By Lemma \ref{lemma}, it follows that \[
        a\sigma^n(a)\sigma^{2n}(a)\cdots\sigma^{(d-1)n}(a) = 0,
    \]for any $n \geq t$ with $n > \text{nildeg}(a)$ and some $d \in \mathbb{N}$.
    Since $\sigma$ is locally torsion there exists an $m = \text{tordeg}(a)$ such that $\sigma^m(a) =a$. We can choose $n$ above to be arbitrarily large. If we choose $n$ to be a nonzero multiple of $m$, then \[
    a\sigma^n(a)\sigma^{2n}(a)\cdots\sigma^{(d-1)n}(a) = a^d = 0.
    \]
\end{proof}

Noting that $J(R[x;\sigma,\delta])\cap R$ is an ideal such that $ax^{t} \in J(R[x;\sigma,\delta])$ for every $a \in J(R[x;\sigma,\delta])\cap R$ and every $t \in \mathbb{N}$, this new result partially answers a question of Greenfeld, Smoktunowicz and Ziembowski.

\begin{question}\cite[Question 6.17]{GreenfieldSmoktunowiczZiembowski}
    Consider $J(R[x;\sigma,\delta]) \cap R$. Is it nil if we assume that $\delta$ is locally nilpotent? What if we assume that $\sigma$ is locally torsion?
\end{question}

We are now in a position to prove a $q$-skew Amitsur's Theorem.

\begin{cor}
\label{qskewamitsur}
    Let $R$ be an algebra over an uncountable field of characteristic zero. Let $\sigma$ and $\delta$ be a locally torsion automorphism and a locally nilpotent $q$-skew $\sigma$-derivation on $R$ respectively. Then
    \[
        J(R[x;\sigma,\delta]) = (J(R[x;\sigma,\delta])\cap R)[x;\sigma,\delta],
    \] and $J(R[x;\sigma,\delta])\cap R$ is a nil ideal of $R$.
\end{cor}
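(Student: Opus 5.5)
The corollary has two parts: nilness of the constant part of the radical, and Amitsur's property. I will get the first from Theorem 1 and the second from Shin's characterisation in characteristic zero.

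\textbf{Step 1: nilness.} Set $N = J(R[x;\sigma,\delta])\cap R$. This is an ideal of $R$, and it equals the set $I$ of Theorem 1 with $t=0$. The hypotheses of Theorem 1 hold verbatim: $k$ is uncountable, $\sigma$ is locally torsion, and $\delta$ is locally nilpotent and $q$-skew. Hence $N$ is nil. By the cited Lemma of Shin, $N$ is also $(\sigma,\delta)$-stable. Therefore $N[x;\sigma,\delta]$ is a well-defined ideal of $R[x;\sigma,\delta]$, and it is contained in $J(R[x;\sigma,\delta])$ because $J$ is an ideal containing $N$.

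\textbf{Step 2: the reverse inclusion.} I would invoke Shin's characterisation \cite[Theorem 1]{shin2024jacobsonradicalsoreextensions}. In characteristic zero with $\delta$ $q$-skew, it describes $J(R[x;\sigma,\delta])$ in Bedi--Ram style: the radical consists of polynomials whose coefficients lie in certain ideals built from $N$. When $\sigma$ is locally torsion these ideals should collapse to $N$ itself, just as in Bedi and Ram's locally torsion case.

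Concretely, I would take $f=\sum a_i x^i\in J(R[x;\sigma,\delta])$ and aim to show each $a_i\in N$. Suppose Shin's result gives $a_i x^{i}\in J$ for each $i$, or gives $a_i$ in some ideal $N_i$ with $a_i x^{j}\in J$ for large $j$. The locally torsion hypothesis then lets me choose $j$ to be a multiple of $\operatorname{tordeg}(a_i)$ and apply Theorem 1 with general $t$. That shows each such coefficient ideal is nil.

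To finish I still need the coefficients to lie in $N$, not merely in a nil ideal. For this I would show that each coefficient ideal of the radical is contained in $N$. The route is to pass to $\bar R=R/N$, where $\sigma$ and $\delta$ descend by stability. There $J(\bar R[x;\sigma,\delta])\cap\bar R=0$. Shin's characterisation then forces $J(\bar R[x;\sigma,\delta])=0$, and lifting back gives $J\subseteq N[x;\sigma,\delta]$. A standard point needed here is that $J(R[x;\sigma,\delta])$ maps into $J(\bar R[x;\sigma,\delta])$. This holds because $N[x;\sigma,\delta]\subseteq J$, and $J$ modulo an ideal contained in it is the radical of the quotient.

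\textbf{Main obstacle.} The hardest step is matching the exact form of Shin's theorem to conclude that the radical of $\bar R[x;\sigma,\delta]$ vanishes when its constant part vanishes. This likely uses characteristic zero, so that the $q$-binomial and $n!$ coefficients that appear are invertible, together with locally torsion to rule out Bedi--Ram-type counterexamples. My first attempt would be to read off from Shin's description that the leading coefficients of radical elements form a $\sigma$-stable ideal whose elements $a$ satisfy $ax^t\in J$ for large $t$. Theorem 1 then places these coefficients inside $N$, and an induction on degree finishes the proof.
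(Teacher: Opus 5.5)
Your Step 1 is fine. $N=J(R[x;\sigma,\delta])\cap R$ is the $t=0$ case of Theorem 1, so it is nil, and $N[x;\sigma,\delta]\subseteq J(R[x;\sigma,\delta])$.

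\textbf{The gap is in Step 2.} This step carries all the real content of the corollary, and you leave it as an unresolved ``main obstacle''. The mechanism you sketch for it also does not work. Theorem 1 only tells you that an ideal of the form $\{a : ax^t\in J(R[x;\sigma,\delta])\}$ is \emph{nil*}. It never ``places these coefficients inside $N$'', and nilness alone does not give membership in $N$. Your passage to $\bar R=R/N$ is circular as written. The claim that ``Shin's characterisation forces $J(\bar R[x;\sigma,\delta])=0$'' is exactly the statement to be proved, now for $\bar R$ in place of $R$.

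\textbf{What is missing} is the precise form of Shin's theorem plus one standard fact. Shin's Theorem 1 states
\[
J(R[x;\sigma,\delta]) = (I\cap J(R)) + I_0,
\]
where $I=\{a\in R: ax\in J(R[x;\sigma,\delta])\}$ and $I_0=\{\sum_{i\ge 1}a_ix^i : a_i\in I\}$. Apply Theorem 1 with $t=1$: $I$ is a nil ideal. Nil ideals lie in $J(R)$, so $I\cap J(R)=I$. Hence
\[
J(R[x;\sigma,\delta]) = I + I_0 = I[x;\sigma,\delta].
\]
This immediately gives $J(R[x;\sigma,\delta])\cap R = I$, which is nil, and also Amitsur's property. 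No quotient, induction on degree, or extra use of local torsion is needed.

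The Bedi--Ram-type obstruction you worried about lives entirely in the $\cap J(R)$ term on the constants. It is removed by the nilness of $I$, not by coefficient ideals collapsing under $\sigma$. Characteristic zero enters only as a hypothesis of Shin's theorem, and local torsion only through Theorem 1.
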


\begin{proof}
    By \cite[Theorem 1]{shin2024jacobsonradicalsoreextensions}, we have that \[J(R[x;\sigma,\delta]) = I \cap J(R) +I_0,\]where $I = \{a \in R : ax \in J(R[x;\sigma,\delta])\}$ and $I_0 = \{ \sum_{i\geq 1} a_ix^i : a_i \in I\}$. By Theorem \ref{theorem}, it follows that $I$ is a nil ideal of $R$. Thus, $I \subseteq J(R)$. Hence,\[
        J(R[x;\sigma,\delta]) = I[x;\sigma,\delta].
    \] Thus, $J(R[x;\sigma,\delta]) \cap R = I[x;\sigma,\delta]\cap R = I$ is a nil ideal of $R$ and \[
        J(R[x;\sigma,\delta]) = (J(R[x;\sigma,\delta]) \cap R)[x;\sigma,\delta],
    \]as required.
\end{proof}

To end this paper we suggest some possible directions of study. 

\begin{question}
    Does there exist an Ore extension $R[x;\sigma,\delta]$ satisfying the assumptions of Corollary \ref{qskewamitsur} such that $J(R[x;\sigma,\delta]) \cap R$ is not the nilradical?
\end{question}

\begin{question}
    Does Theorem \ref{theorem} hold if $q\sigma\delta \neq \delta \sigma$? Does Corollary \ref{qskewamitsur} hold if $\sigma$ is only locally finite?
\end{question}

We conjecture that the above question will be answered in the affirmative.

Even in the case of $\sigma = 1$ the following is still open.

\begin{question}
    If $R$ is an algebra over a field of characteristic 0, is $J(R[x;\delta])\cap R$ nil?
\end{question}

\begin{acknowledgements}
    The author is grateful to Professor Agata Smoktunowicz for her assistance, guidance and invaluable advice.
\end{acknowledgements}

\bibliography{References}

@misc{shin2024jacobsonradicalsoreextensions,
  author = {Shin, J.},
  title = {Jacobson radicals of {O}re extensions},
  note = {arXiv:2405.16342},
  year = {2024},
}

@article{GreenfieldSmoktunowiczZiembowski,
  author = {Greenfeld, B. and Smoktunowicz, A. and Ziembowski, M.},
  title = {Five solved problems on radicals of {O}re extensions},
  journal = {Publ. Mat.},
  volume = {63},
  number = {2},
  pages = {423--444},
  year = {2019},
}

@book{lam1991first,
  author = {Lam, T. Y.},
  title = {A First Course in Noncommutative Rings},
  series = {Grad. Texts in Math.},
  publisher = {Springer},
  year = {1991},
}

@book{goodearl2004introduction,
  author = {Goodearl, K. R. and Warfield, R. B.},
  title = {An Introduction to Noncommutative Noetherian Rings},
  publisher = {Cambridge Univ. Press},
  year = {2004},
}

@article{Bedi1980,
  author = {Bedi, S. S. and Ram, J.},
  title = {Jacobson radical of skew polynomial rings and skew group rings},
  journal = {Israel J. Math.},
  volume = {35},
  number = {4},
  pages = {327--338},
  year = {1980},
}

@article{AgataHowFarCanWeGo,
  author = {Smoktunowicz, A.},
  title = {How far can we go with {A}mitsur's conjecture in differential polynomial rings?},
  journal = {Israel J. Math.},
  volume = {219},
  number = {2},
  pages = {555--608},
  year = {2017},
}

@article{SMOKTUNOWICZuncountablefield,
  author = {Smoktunowicz, A. and Ziembowski, M.},
  title = {Differential polynomial rings over locally nilpotent rings need not be {J}acobson radical},
  journal = {J. Algebra},
  volume = {412},
  pages = {207--217},
  year = {2014},
}

@article{Amitsur_1956,
  author = {Amitsur, S. A.},
  title = {Radicals of polynomial rings},
  journal = {Canad. J. Math.},
  volume = {8},
  pages = {355--361},
  year = {1956},
}

@article{Kothe1930,
  author = {K{\"o}the, G.},
  title = {Die Struktur der Ringe, deren Restklassenring nach dem Radikal vollst{\"a}ndig reduzibel ist},
  journal = {Math. Z.},
  volume = {32},
  number = {1},
  pages = {161--186},
  year = {1930},
}

@article{Krempa1972,
  author = {Krempa, J.},
  title = {Logical connections between some open problems concerning nil rings},
  journal = {Fund. Math.},
  volume = {76},
  number = {2},
  pages = {121--130},
  year = {1972},
}

@article{SMOKTUNOWICZ2000427,
  author = {Smoktunowicz, A.},
  title = {Polynomial rings over nil rings need not be nil},
  journal = {J. Algebra},
  volume = {233},
  number = {2},
  pages = {427--436},
  year = {2000},
}

@book{Goodearlformula,
  author = {Goodearl, K. R. and Letzter, E. S.},
  title = {Prime Ideals in Skew and $q$-Skew Polynomial Rings},
  series = {Mem. Amer. Math. Soc.},
  number = {521},
  publisher = {Amer. Math. Soc.},
  year = {1994},
}

@article{DiffPolyRadFerrero,
  author = {Ferrero, M. and Kishimoto, K. and Motose, K.},
  title = {On radicals of skew polynomial rings of derivation type},
  journal = {J. London Math. Soc.},
  volume = {s2-28},
  number = {1},
  pages = {8--16},
  year = {1983},
}
\Addresses
\end{document}